\documentclass[11pt,a4paper,twoside,onecolumn, english]{extarticle}
\usepackage[utf8]{inputenc}
\usepackage{color}
\usepackage{float}
\usepackage{amsmath}
\usepackage{amsthm}
\usepackage{amssymb}
\usepackage{graphicx}
\usepackage[numbers]{natbib}
\usepackage[unicode=true,
 bookmarks=false,
 breaklinks=false,pdfborder={0 0 1},backref=section,colorlinks=false]
 {hyperref}

\makeatletter
\theoremstyle{plain}
\newtheorem{thm}{\protect\theoremname}
\theoremstyle{plain}
\newtheorem{prop}[thm]{\protect\propositionname}




\usepackage{stmaryrd}
\textwidth 17cm
\topmargin -1cm
\oddsidemargin -0.0cm
\evensidemargin -0.0cm
\headsep 1cm
\textheight 22cm


\makeatother

\providecommand{\propositionname}{Proposition}
\providecommand{\theoremname}{Theorem}

\begin{document}
\title{Controlling independently the heading and the position of a Dubins
car using Lie brackets}
\author{Luc Jaulin}
\maketitle
\begin{abstract}
In this paper, we give a control approach to follow a trajectory for
a Dubins car controlling the heading independently. The difficulty
is that the Dubins car should have a heading corresponding to the
argument of the vector speed of the vehicle. This non holonomic constraint
can be relaxed thanks to a specific control which uses Lie brackets.
We will show such a control will allow us, at least from a theoretical
point of view, to control independently the moving position of the
car and the heading. 
\end{abstract}

\section{Introduction}

The motivation of Lie brackets for control is illustrated by Figure
\ref{fig:lie_backet_car} on the car parking problem. We would like
to move the blue car sideway between the two red cars already parked.
Moving sideway is not possible, but we can approach this direction
using many small forward-backward maneuvers. Building this new sideway
direction corresponds to the Lie bracket operator between two dynamics. 

The difficulty that we meet when we park a car is that we have two
inputs (the acceleration and the steering wheel) whereas we want to
control three quantities: two for the position and one for the heading.
The goal of this paper is to illustrate how controlling three directions
from two actuators can be done using Lie brackets. This will be illustrated
on the case of a first order Dubins car and a second order Dubins
car \citep{Dubins57}\citep{laumond:86}. 

\begin{figure}[H]
\centering\includegraphics[width=14cm]{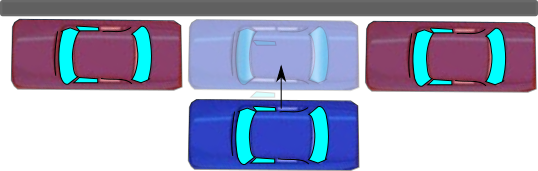}
\caption{Lie brackets can help to maneuver in order to park the blue car}
\label{fig:lie_backet_car}
\end{figure}

The paper is organized as follows. Section \ref{sec:Control_driftless}
shows how to control driftless systems using Lie brackets. Section
\ref{sec:first:order:dubins} illustrates the principle on the first
order Dubins car and Section \ref{sec:second:order:dubins} on the
second order Dubins car. Section \ref{sec:Conclusion} concludes the
paper.

\section{Control of a driftless system with two inputs\label{sec:Control_driftless}}

A \emph{driftless system} is a system of the form
\begin{equation}
\dot{\mathbf{x}}=\mathbf{F}(\mathbf{x})\cdot\mathbf{u}.
\end{equation}
where $\mathbf{x}\in\mathbb{R}^{n}$ is the state vector, $\mathbf{u}\in\mathbb{R}^{m}$
is the input vector, $\mathbf{F}$ is a $n\times m$ matrix which
depends on $\mathbf{x}$. If $\mathbf{u}=\mathbf{0}$ then the evolution
of the system stops. Lie brackets are mainly used to control this
type of systems. In this section, for simplicity, we consider that
$m=2$, which means that we have two inputs.

\subsection{Definition of Lie brackets}

\noindent Consider two vector fields \textbf{f} and $\mathbf{g}$
of $\mathbb{R}^{n}$ corresponding to two state equations $\dot{\mathbf{x}}=\mathbf{f}(\mathbf{x})$
and $\dot{\mathbf{x}}=\mathbf{g}(\mathbf{x})$. We define the Lie
bracket between these two vector fields as
\begin{equation}
[\mathbf{f},\mathbf{g}]=\frac{d\mathbf{g}}{d\mathbf{x}}\cdot\mathbf{f}-\frac{d\mathbf{f}}{d\mathbf{x}}\cdot\mathbf{g}.
\end{equation}
The adjoint notation can be used: 
\begin{equation}
\text{ad}_{\mathbf{f}}\mathbf{g}=[\mathbf{f},\mathbf{g}].
\end{equation}

We can check that the set of vector fields equipped with the Lie bracket
is a Lie algebra. The proof is not difficult but tedious. As a consequence
we have the following Jacobi equality
\begin{equation}
[\mathbf{f},[\mathbf{g},\mathbf{h}]]+[\mathbf{h},[\mathbf{f},\mathbf{g}]]+[\mathbf{g},[\mathbf{h},\mathbf{f}]]=\mathbf{0}.
\end{equation}
 In this paper, the fact that the set of vector fields equipped with
the Lie bracket is a Lie algebra will not be needed.

As an example, consider the two linear vector fields $\mathbf{f}(\mathbf{x})=\mathbf{A}\cdot\mathbf{x}$,
$\mathbf{g}(\mathbf{x})=\mathbf{B}\cdot\mathbf{x}$. We have
\begin{equation}
\begin{array}{ccl}
[\mathbf{f},\mathbf{g}]\left(\mathbf{x}\right) & = & \frac{d\mathbf{g}}{d\mathbf{x}}\cdot\mathbf{f}\left(\mathbf{x}\right)-\frac{d\mathbf{f}}{d\mathbf{x}}\cdot\mathbf{g}\left(\mathbf{x}\right)\\
 & = & \mathbf{B}\cdot\mathbf{A}\cdot\mathbf{x}-\mathbf{A}\cdot\mathbf{B}\cdot\mathbf{x}\\
 & = & \left(\mathbf{B}\mathbf{A}-\mathbf{A}\mathbf{B}\right)\cdot\mathbf{x}.
\end{array}
\end{equation}

\subsection{Creating new direction with Lie brackets}

Lie brackets are often used to check the local accessibility near
driftless states \citep{Pernebo81} \citep{AndreaNovel88}. The accessibility
distribution (or \emph{Lie ideal}) Lie$(\mathbf{f},\mathbf{g})$ of
two vector fields $\mathbf{f},\mathbf{g}$ is obtained by taking all
vector fields that can be generated using Lie brackets from $\mathbf{f}$
and $\mathbf{g}$ \citep{DulebaK05}. If at a driftless state $\bar{\mathbf{x}}$,
Lie$(\mathbf{f},\mathbf{g})$ spans all directions of $\mathbb{R}^{n}$,
then we can conclude \citep{LaValle:06}) that the system is locally
accessible. Now, Lie brackets can also be used to control driftless
systems as shown by the following proposition.
\begin{prop}
Consider the driftless system
\begin{equation}
\dot{\mathbf{x}}=\mathbf{f}(\mathbf{x})\cdot u_{1}+\mathbf{g}(\mathbf{x})\cdot u_{2}.
\end{equation}
Consider the following cyclic sequence:
\[
\begin{array}{ccccccccc}
t\in[0,\delta] &  & t\in[\delta,2\delta] &  & t\in[2\delta,3\delta] &  & t\in[3\delta,4\delta] & t\in[4\delta,5\delta] & \dots\\
\mathbf{u}=\left(1,0\right) &  & \mathbf{u}=\left(0,1\right) &  & \mathbf{u}=\left(-1,0\right) &  & \mathbf{u}=\left(0,-1\right) & \mathbf{u}=\left(1,0\right) & \dots
\end{array}
\]
where $\delta$ is an infinitesimal time period $\delta$. This periodic
sequence will be denoted by
\begin{equation}
\left\{ \left(1,0\right),\left(0,1\right),\left(-1,0\right),\left(0,-1\right)\right\} 
\end{equation}
We have: 
\begin{equation}
\mathbf{x}(t+2\delta)=\mathbf{x}\left(t-2\delta\right)+[\mathbf{f},\mathbf{g}]\left(\mathbf{x}(t)\right)\delta^{2}+o\left(\delta^{2}\right).
\end{equation}
\end{prop}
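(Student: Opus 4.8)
The plan is to track the state through the four successive phases of one full period by Taylor-expanding each one-step flow to second order in $\delta$, and then to show that everything except the Lie bracket cancels. Write $\mathbf{x}_{0}=\mathbf{x}(t-2\delta)$ for the state at the start of the period. Over an interval of length $\delta$ the solution of $\dot{\mathbf{x}}=\mathbf{v}(\mathbf{x})$ issued from $\mathbf{y}$ is the time-$\delta$ flow, which I denote $\Phi_{\delta}^{\mathbf{v}}(\mathbf{y})$; since $\ddot{\mathbf{x}}=\frac{d\mathbf{v}}{d\mathbf{x}}\dot{\mathbf{x}}=\frac{d\mathbf{v}}{d\mathbf{x}}\mathbf{v}$, Taylor expansion in $\delta$ gives
\[
\Phi_{\delta}^{\mathbf{v}}(\mathbf{y})=\mathbf{y}+\delta\,\mathbf{v}(\mathbf{y})+\frac{\delta^{2}}{2}\,\frac{d\mathbf{v}}{d\mathbf{x}}(\mathbf{y})\,\mathbf{v}(\mathbf{y})+o(\delta^{2}).
\]
Reading the control table from left to right (so the $\mathbf{f}$-leg acts first), the state after one period is the composition
\[
\mathbf{x}(t+2\delta)=\Phi_{\delta}^{-\mathbf{g}}\circ\Phi_{\delta}^{-\mathbf{f}}\circ\Phi_{\delta}^{\mathbf{g}}\circ\Phi_{\delta}^{\mathbf{f}}(\mathbf{x}_{0}).
\]

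First I would evaluate these four maps in order, each time substituting the expansion of the previous stage and re-expanding the field $\mathbf{v}$ and its Jacobian $\frac{d\mathbf{v}}{d\mathbf{x}}$ about the single fixed base point $\mathbf{x}_{0}$, discarding all $o(\delta^{2})$ contributions. The decisive mechanism is that whenever a field is evaluated at a point already displaced by $O(\delta)$ along another field, the first-order correction so produced is promoted to order $\delta^{2}$ by the $\delta$ already multiplying that leg: the $\mathbf{g}$-leg evaluated after the $\mathbf{f}$-leg produces $+\delta^{2}\frac{d\mathbf{g}}{d\mathbf{x}}(\mathbf{x}_{0})\,\mathbf{f}(\mathbf{x}_{0})$, and the reversed $-\mathbf{f}$-leg evaluated after the $\mathbf{g}$-leg produces $-\delta^{2}\frac{d\mathbf{f}}{d\mathbf{x}}(\mathbf{x}_{0})\,\mathbf{g}(\mathbf{x}_{0})$. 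These two cross-terms are exactly $\delta^{2}[\mathbf{f},\mathbf{g}](\mathbf{x}_{0})$.

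It then remains to verify that nothing else survives. The four first-order terms are $+\delta\mathbf{f}$, $+\delta\mathbf{g}$, $-\delta\mathbf{f}$, $-\delta\mathbf{g}$, which sum to zero, confirming that the trajectory returns to within $O(\delta^{2})$ of $\mathbf{x}_{0}$. The quadratic ``self'' terms also cancel: the three contributions proportional to $\frac{d\mathbf{f}}{d\mathbf{x}}\mathbf{f}$ (one from the forward $\mathbf{f}$-leg, one from re-expanding $\mathbf{f}$ along its own displacement during the reversed leg, and one from the reversed leg's curvature) carry coefficients $\tfrac{1}{2},-1,\tfrac{1}{2}$ and sum to zero, and the same holds for $\frac{d\mathbf{g}}{d\mathbf{x}}\mathbf{g}$. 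Only the two cross-terms remain, so
\[
\mathbf{x}(t+2\delta)=\mathbf{x}_{0}+\delta^{2}\left(\frac{d\mathbf{g}}{d\mathbf{x}}(\mathbf{x}_{0})\,\mathbf{f}(\mathbf{x}_{0})-\frac{d\mathbf{f}}{d\mathbf{x}}(\mathbf{x}_{0})\,\mathbf{g}(\mathbf{x}_{0})\right)+o(\delta^{2})=\mathbf{x}(t-2\delta)+\delta^{2}[\mathbf{f},\mathbf{g}](\mathbf{x}_{0})+o(\delta^{2}).
\]
Finally, since the path from $\mathbf{x}(t-2\delta)$ to $\mathbf{x}(t)$ has length $O(\delta)$, we have $[\mathbf{f},\mathbf{g}](\mathbf{x}_{0})=[\mathbf{f},\mathbf{g}](\mathbf{x}(t))+O(\delta)$, and after multiplication by $\delta^{2}$ the discrepancy is $O(\delta^{3})=o(\delta^{2})$; the bracket may therefore be evaluated at the midpoint state $\mathbf{x}(t)$, as stated.

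I expect the main obstacle to be purely the bookkeeping of the composition: one must re-expand every field and Jacobian consistently about the one base point $\mathbf{x}_{0}$ and carry all terms through order $\delta^{2}$, since a single dropped cross-term or a sign slip would spoil the identity — in particular, getting the sign convention to match the definition $[\mathbf{f},\mathbf{g}]=\frac{d\mathbf{g}}{d\mathbf{x}}\mathbf{f}-\frac{d\mathbf{f}}{d\mathbf{x}}\mathbf{g}$ requires care about which leg contributes which cross-term. A shorter but essentially equivalent route would be to invoke the flow-commutator (Baker--Campbell--Hausdorff) identity, whose leading nonvanishing term for this alternating product is the commutator of the two fields; I would nonetheless prefer the direct expansion here because it is self-contained and displays explicitly which terms cancel.
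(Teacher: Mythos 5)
Your proof is correct, and it rests on the same underlying technique as the paper's — a second-order Taylor expansion of each constant-control leg, with the Lie bracket emerging from the cross-terms $\frac{d\mathbf{g}}{d\mathbf{x}}\mathbf{f}$ and $-\frac{d\mathbf{f}}{d\mathbf{x}}\mathbf{g}$ produced when a field is evaluated at a point already displaced along the other field; I checked your coefficient bookkeeping (the $\tfrac12,-1,\tfrac12$ cancellation of the self-terms $\frac{d\mathbf{f}}{d\mathbf{x}}\mathbf{f}$ and $\frac{d\mathbf{g}}{d\mathbf{x}}\mathbf{g}$, and the survival of exactly $\delta^{2}(\frac{d\mathbf{g}}{d\mathbf{x}}\mathbf{f}-\frac{d\mathbf{f}}{d\mathbf{x}}\mathbf{g})$) and it is right. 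Where you differ is in the decomposition. You anchor everything at the start of the period $\mathbf{x}_{0}=\mathbf{x}(t-2\delta)$ and compose all four flows $\Phi_{\delta}^{-\mathbf{g}}\circ\Phi_{\delta}^{-\mathbf{f}}\circ\Phi_{\delta}^{\mathbf{g}}\circ\Phi_{\delta}^{\mathbf{f}}$ sequentially, which forces you to re-expand three successive fields about $\mathbf{x}_{0}$ and then, at the very end, to relocate the bracket from $\mathbf{x}_{0}$ to the midpoint state $\mathbf{x}(t)$ (a step you handle correctly: the $O(\delta)$ discrepancy is absorbed into $o(\delta^{2})$ after multiplication by $\delta^{2}$, and mere continuity of the bracket already suffices for this). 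The paper instead anchors at the midpoint $t=0$, expands only the two forward legs to get $\mathbf{x}(2\delta)-\mathbf{x}(0)$, obtains $\mathbf{x}(-2\delta)-\mathbf{x}(0)$ for free by the symmetry substitution $\delta\rightarrow-\delta$, $\mathbf{f}\rightarrow-\mathbf{g}$, $\mathbf{g}\rightarrow-\mathbf{f}$, and subtracts; this halves the algebra, makes the cancellation of first-order and self-terms automatic in the subtraction, and lands the bracket at $\mathbf{x}(t)$ directly with no relocation step. Your route is more systematic and generalizes more readily (it is exactly the flow-commutator computation you mention at the end), at the cost of heavier bookkeeping; the paper's route is shorter but relies on spotting the midpoint symmetry.
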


\begin{proof}
Without loss of generality, the proof will be done for $t=0$. We
shall use the following notations
\begin{equation}
\begin{array}{ccc}
\frac{d\mathbf{f}}{d\mathbf{x}}(\mathbf{x}(t)) & \leftrightarrow & \mathbf{A}_{t}\\
\frac{d\mathbf{g}}{d\mathbf{x}}(\mathbf{x}(t)) & \leftrightarrow & \mathbf{B}_{t}
\end{array}
\end{equation}
For a given $t$, and a small $\delta$ we have
\begin{equation}
\mathbf{x}(t+\delta)-\mathbf{x}\left(t\right)=\dot{\mathbf{x}}(t)\cdot\delta+\ddot{\mathbf{x}}(t)\cdot\frac{\delta^{2}}{2}+o\left(\delta^{2}\right)\label{eq:diff:x:delta}
\end{equation}
with
\begin{equation}
\begin{array}{cccc}
\dot{\mathbf{x}}(t) & = & \mathbf{f}\left(\mathbf{x}\left(t\right)\right)u_{1}+\mathbf{g}\left(\mathbf{x}\left(t\right)\right)u_{2}\\
\ddot{\mathbf{x}}(t) & = & \left(\mathbf{A}_{t}\cdot u_{1}+\mathbf{B}_{t}\cdot u_{2}\right)\dot{\mathbf{x}}(t) & \text{\,\,\,(if \ensuremath{\mathbf{u}} is constant)}
\end{array}
\end{equation}
Taking the cyclic sequence, we have
\begin{equation}
\begin{array}{cclcc}
\dot{\mathbf{x}}(0) & = & -\mathbf{f}\left(\mathbf{x}\left(0\right)\right)\\
\ddot{\mathbf{x}}(0) & = & -\mathbf{A}_{0}\dot{\mathbf{x}}(0)=\mathbf{A}_{0}\mathbf{f}\left(\mathbf{x}\left(0\right)\right) & \,\,\, & (u_{1}=-1,u_{2}=0)\\
\dot{\mathbf{x}}(\delta) & = & -\mathbf{g}\left(\mathbf{x}\left(\delta\right)\right)\\
\ddot{\mathbf{x}}(\delta) & = & -\mathbf{B}_{\delta}\dot{\mathbf{x}}(\delta)=\mathbf{B}_{\delta}\mathbf{g}\left(\mathbf{x}\left(\delta\right)\right) &  & (u_{1}=0,u_{2}=-1)
\end{array}
\end{equation}
We get 
\begin{equation}
\begin{array}{cccc}
\mathbf{x}(\delta)-\mathbf{x}\left(0\right) & \overset{(\ref{eq:diff:x:delta})}{=} & -\mathbf{f}(\mathbf{x}(0))\delta+\mathbf{A}_{0}\cdot\mathbf{f}(\mathbf{x}(0))\cdot\frac{\delta^{2}}{2}+o\left(\delta^{2}\right) & \,\\
\mathbf{x}(2\delta)-\mathbf{x}\left(\delta\right) & \overset{(\ref{eq:diff:x:delta})}{=} & -\mathbf{g}(\mathbf{x}(\delta))\delta+\mathbf{B}_{\delta}\cdot\mathbf{g}(\mathbf{x}(\delta))\cdot\frac{\delta^{2}}{2}+o\left(\delta^{2}\right)
\end{array}
\end{equation}
The sum yields
\[
\begin{array}{ccc}
\mathbf{x}(2\delta)-\mathbf{x}\left(0\right) & = & -\mathbf{f}(\mathbf{x}(0))\cdot\delta+\mathbf{A}_{0}\cdot\mathbf{f}(\mathbf{x}(0))\cdot\frac{\delta^{2}}{2}-\mathbf{g}(\mathbf{x}(\delta))\cdot\delta+\mathbf{B}_{\delta}\cdot\mathbf{g}(\mathbf{x}(\delta))\cdot\frac{\delta^{2}}{2}+o\left(\delta^{2}\right).\end{array}
\]
Now, 
\[
\begin{array}{ccccc}
(i) &  & \mathbf{g}(\mathbf{x}(\delta)) & = & \mathbf{g}(\mathbf{x}(0))+\mathbf{B}_{0}.\underset{=-\mathbf{f}(\mathbf{x}(0))}{\underbrace{\dot{\mathbf{x}}(0)}}\cdot\delta+o\left(\delta\right)=\mathbf{g}(\mathbf{x}(0))+o\left(1\right)\\
(ii) &  & \mathbf{B}_{\delta} & = & \mathbf{B}_{0}+o(1)
\end{array}
\]
Therefore
\begin{equation}
\begin{array}{ccc}
\mathbf{x}(2\delta)-\mathbf{x}\left(0\right) & = & -\mathbf{f}(\mathbf{x}(0))\cdot\delta+\mathbf{A}_{0}\cdot\mathbf{f}(\mathbf{x}(0))\cdot\frac{\delta^{2}}{2}\\
 &  & -\underset{\overset{(i)}{=}\mathbf{g}(\mathbf{x}(0))-\mathbf{B}_{0}.\mathbf{f}(\mathbf{x}(0))\cdot\delta+o\left(\delta\right)}{\underbrace{\mathbf{g}(\mathbf{x}(\delta))}}\cdot\delta+\underset{\overset{(ii)}{=}\mathbf{B}_{0}+o(1)}{\underbrace{\mathbf{B}_{\delta}}}\cdot\underset{\overset{(i)}{=}\mathbf{g}(\mathbf{x}(0))+o\left(1\right)}{\underbrace{\mathbf{g}(\mathbf{x}(\delta))}}\cdot\frac{\delta^{2}}{2}+o\left(\delta^{2}\right)\\
 & = & -\mathbf{f}(\mathbf{x}(0))\cdot\delta+\mathbf{A}_{0}\cdot\mathbf{f}(\mathbf{x}(0))\cdot\frac{\delta^{2}}{2}\\
 &  & -\mathbf{g}(\mathbf{x}(0))\cdot\delta+\mathbf{B}_{0}\cdot\mathbf{f}(\mathbf{x}(0))\cdot\delta^{2}+\mathbf{B}_{0}\cdot\mathbf{g}(\mathbf{x}(0))\cdot\frac{\delta^{2}}{2}+o\left(\delta^{2}\right)
\end{array}\label{eq:x2delta}
\end{equation}
The same reasoning yields 
\[
\begin{array}{ccc}
\mathbf{x}(-2\delta)-\mathbf{x}\left(0\right) & = & -\mathbf{g}(\mathbf{x}(0))\cdot\delta+\mathbf{B}_{0}\cdot\mathbf{g}(\mathbf{x}(0))\cdot\frac{\delta^{2}}{2}\\
 &  & -\mathbf{f}(\mathbf{x}(0))\cdot\delta+\mathbf{A}_{0}.\mathbf{g}(\mathbf{x}(0))\cdot\delta^{2}+\mathbf{A}_{0}\cdot\mathbf{f}(\mathbf{x}(0))\cdot\frac{\delta^{2}}{2}+o\left(\delta^{2}\right)
\end{array}
\]
This result could be obtained directly from (\ref{eq:x2delta}) by
rewriting: $\delta\rightarrow-\delta,\mathbf{f}\rightarrow-\mathbf{g},\mathbf{g}\rightarrow-\mathbf{f},\mathbf{A}_{0}\rightarrow-\mathbf{B}_{0},\mathbf{B}_{0}\rightarrow-\mathbf{A}_{0}$
. Thus
\begin{equation}
\begin{array}{ccc}
\mathbf{x}(2\delta)-\mathbf{x}(-2\delta) & = & \underset{[\mathbf{f},\mathbf{g}]\left(\mathbf{x}(0)\right)}{\underbrace{\left(\mathbf{B}_{0}\cdot\mathbf{f}(\mathbf{x}(0))-\mathbf{A}_{0}\cdot\mathbf{g}(\mathbf{x}(0))\right)}}\delta^{2}+o\left(\delta^{2}\right)\end{array}
\end{equation}
The consequence is that using the periodic sequence, we are now able
to move with respect to the direction $[\mathbf{f},\mathbf{g}]$.
\end{proof}

\subsection{Controlling the new direction created by the Lie brackets}

In this section, we want to find which periodic sequence we should
take in order to follow the field $\nu\cdot[\mathbf{f},\mathbf{g}]$.
We will first consider the case $\nu\geq0$ and then the case $\nu\leq0$. 

We have shown that within a time period of $4\delta$ we moved in
the direction $[\mathbf{f},\mathbf{g}]$ by $[\mathbf{f},\mathbf{g}]\delta^{2}$.
This means that we follow the field $\frac{\delta^{2}}{4\delta}[\mathbf{f},\mathbf{g}]=\frac{\delta}{4}[\mathbf{f},\mathbf{g}]$
which is infinitesimal. Multiplying the cyclic sequence by the scalar
$\alpha\in\mathbb{R}$ amounts to multiplying both $\mathbf{f},\mathbf{g}$
by $\alpha$. The field thus becomes $\frac{\delta}{4}[\alpha\mathbf{f},\alpha\mathbf{g}]=\frac{\delta}{4}\alpha^{2}[\mathbf{f},\mathbf{g}]$.
If we want the field $\nu\cdot[\mathbf{f},\mathbf{g}]$, we have to
multiply the sequence by $\alpha=\sqrt{\frac{4|\nu|}{\delta}}$. If
$\nu$ is negative, we have to change the orientation of the sequence.
The cyclic sequence is thus
\begin{equation}
\left\{ \left(\varepsilon\alpha,0\right),\left(0,\alpha\right),\left(-\varepsilon\alpha,0\right),\left(0,-\alpha\right)\right\} 
\end{equation}
where $\varepsilon=\text{sign}(\nu)$ changes the orientation of the
sequence (clockwise for $\varepsilon=1$, counterclockwise for $\varepsilon=-1$).

\subsection{Controller}

In this section, we propose a controller such that closed loop system
becomes 
\[
\dot{\mathbf{x}}=\mathbf{f}(\mathbf{x})\cdot a_{1}+\mathbf{g}(\mathbf{x})\cdot a_{2}+[\mathbf{f},\mathbf{g}]\left(\mathbf{x}\right)\cdot a_{3}
\]
where $\mathbf{a}=\left(a_{1},a_{2},a_{3}\right)$ is the new input
vector.

If we want to follow the field $a_{1}\mathbf{f}+a_{2}\mathbf{g}+a_{3}[\mathbf{f},\mathbf{g}]$,
we have to take the sequence
\begin{equation}
\left\{ \left(a_{1}+\varepsilon\alpha,a_{2}\right),\left(a_{1},a_{2}+\alpha\right),\left(a_{1}-\varepsilon\alpha,a_{2}\right),\left(a_{1},a_{2}-\alpha\right)\right\} 
\end{equation}
with $\alpha=\sqrt{\frac{4|a_{3}|}{\delta}}$ and $\varepsilon=\text{sign}(a_{3}).$
As illustrated by Figure \ref{fig:lie_multiplex}, we succeeded to
create a new input to our system which corresponds to the direction
pointed by the Lie bracket. Note that the equivalence given in the
figure should be understood as an approximation since $\delta$ is
not infinitesimal.

\begin{figure}[H]
\centering\includegraphics[width=14cm]{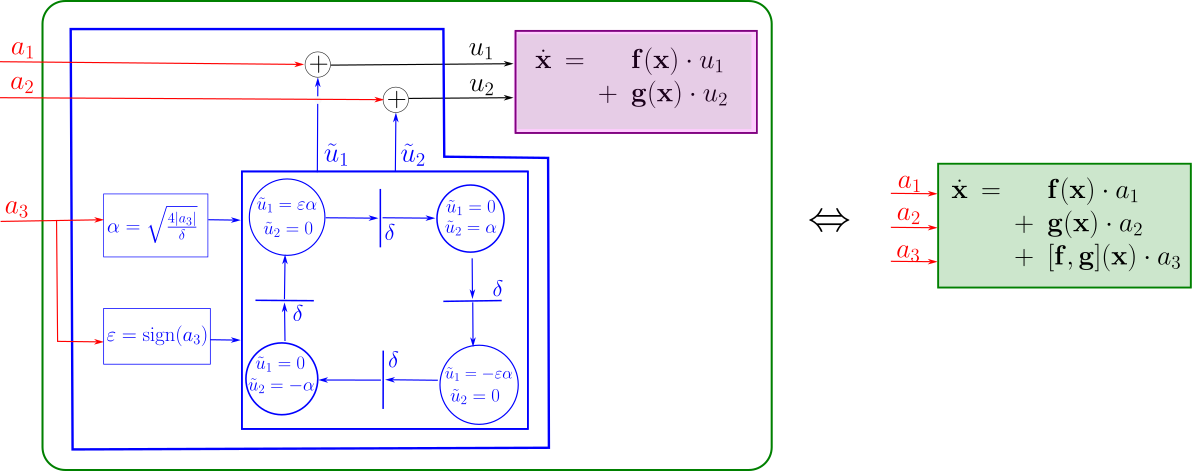}\caption{The feed forward controller (blue) allows us to control the system
in the direction of the Lie bracket}

\label{fig:lie_multiplex}
\end{figure}

\section{Control of the first order Dubins car\label{sec:first:order:dubins}}

\subsection{Dubins car}

Consider a Dubins car described by the following state equations:
\begin{equation}
\left\{ \begin{array}{lll}
\dot{x}_{1} & = & u_{1}\cos x_{3}\\
\dot{x}_{2} & = & u_{1}\sin x_{3}\\
\dot{x}_{3} & = & u_{2}
\end{array}\right.
\end{equation}
where $u_{1}$ is the speed of the cart, $u_{2}$ the rotation rate,
$x_{3}$ its orientation and $(x_{1},x_{2})$ the coordinates of its
center. Using Lie Brackets, we can add a new input to the system,
\emph{i.e.}, a new direction of control. We call this vehicle a first
order Dubins car, since we control the speed and the rotation rate
directly. It will be a second order Dubins car when their derivative
will be controlled instead.

\noindent If we set $\mathbf{x}=\left(x_{1},x_{2},x_{3}\right)$,
we have
\begin{equation}
\dot{\mathbf{x}}=\underset{\mathbf{f}(\mathbf{x})}{\underbrace{\left(\begin{array}{c}
\cos x_{3}\\
\sin x_{3}\\
0
\end{array}\right)}}\cdot u_{1}+\underset{\mathbf{g}(\mathbf{x})}{\underbrace{\left(\begin{array}{c}
0\\
0\\
1
\end{array}\right)}}\cdot u_{2}.
\end{equation}

\noindent We have
\begin{equation}
\begin{array}{ccl}
[\mathbf{f},\mathbf{g}]\left(\mathbf{x}\right) & = & \frac{d\mathbf{g}}{d\mathbf{x}}(\mathbf{x})\cdot\mathbf{f}(\mathbf{x})-\frac{d\mathbf{f}}{d\mathbf{x}}(\mathbf{x})\cdot\mathbf{g}(\mathbf{x})\\
 & = & \left(\begin{array}{ccc}
0 & 0 & 0\\
0 & 0 & 0\\
0 & 0 & 0
\end{array}\right)\left(\begin{array}{c}
\cos x_{3}\\
\sin x_{3}\\
0
\end{array}\right)-\left(\begin{array}{ccc}
0 & 0 & -\sin x_{3}\\
0 & 0 & \cos x_{3}\\
0 & 0 & 0
\end{array}\right)\left(\begin{array}{c}
0\\
0\\
1
\end{array}\right)\\
 & = & \left(\begin{array}{c}
\sin x_{3}\\
-\cos x_{3}\\
0
\end{array}\right)
\end{array}
\end{equation}

\noindent We conclude that we can now move the car laterally.

\subsection{Simulation}

We propose a simulation (taken from \citep{jaulinISTEroben} \citep{jaulin:inmooc})
to check the good behavior of your controller. To do this, take $\delta$
small enough to be consistent with the second order Taylor approximation
but large with respect to the sampling time $dt.$ For instance, we
may take $\delta=\sqrt{dt}.$ The initial state vector is taken as
$\mathbf{x}(0)=(0,0,1)$.

If we apply the cyclic sequence as a controller, we get 
\begin{equation}
\dot{\mathbf{x}}=\underset{\mathbf{f}(\mathbf{x})}{\underbrace{\left(\begin{array}{c}
\cos x_{3}\\
\sin x_{3}\\
0
\end{array}\right)}}\cdot a_{1}+\underset{\mathbf{g}(\mathbf{x})}{\underbrace{\left(\begin{array}{c}
0\\
0\\
1
\end{array}\right)}}\cdot a_{2}+\underset{[\mathbf{f},\mathbf{g}]\left(\mathbf{x}\right)}{\underbrace{\left(\begin{array}{c}
\sin x_{3}\\
-\cos x_{3}\\
0
\end{array}\right)}\cdot}a_{3}
\end{equation}

We made 4 simulations for $\mathbf{a}=(0.1,0,0)$, $\mathbf{a}=(0,0,0.1)$,
$\mathbf{a}=(-0.1,0,0)$ and $\mathbf{a}=(0,0,-0.1)$. We took for
initial vector $\mathbf{x}(0)=(0,0,1)$, $t\in[0,10]$ and $dt=0.01$.
We obtained the results depicted on Figure \ref{fig:lie_croix.pdf}.
We observe that after each simulation, the distance to the origin
is $0.1\cdot10=1$ approximately which is consistent we the fact that
$\mathbf{f}(\mathbf{x})$ and $[\mathbf{f},\mathbf{g}]$ have a norm
equal to 1. We did not show the simulation for $\mathbf{a}=(0,\pm0.1,0)$
since there no displacement: the car turns on itself.

\textcolor{magenta}{}
\begin{figure}[H]
\begin{centering}
\centering\includegraphics[width=12cm]{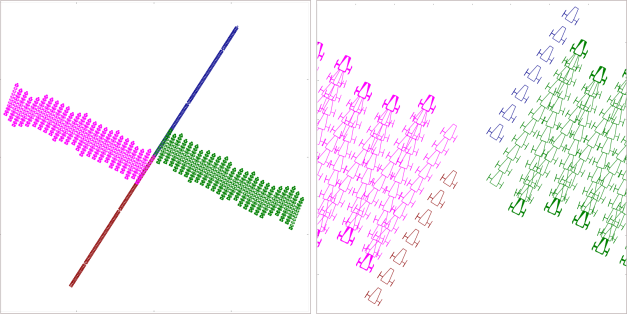}
\par\end{centering}
\caption{Left: simulation of the controller based on the Lie bracket technique.
The frame box is $[-1,1]\times[-1,1]$. Right: the same picture, but
with a frame box equal to $[-0.2,0.2]\times[-0.2,0.2]$. To avoid
superposition in the picture, the size of the car has been reduced
by $1/1000$. The length of the forward/backward subpaths corresponds
to 10cm approximately.}
\label{fig:lie_croix.pdf}
\end{figure}

\subsection{Control the cardinal directions}

We now propose a controller so that the car follows the cardinal directions,
\emph{i.e.}, North, South, East, West. We have

\begin{equation}
\begin{array}{ccc}
\dot{\mathbf{x}} & = & \mathbf{f}(\mathbf{x})\cdot a_{1}+\mathbf{g}(\mathbf{x})\cdot a_{2}+[\mathbf{f},\mathbf{g}]\left(\mathbf{x}\right)\cdot a_{3}\\
 & = & \underset{=\mathbf{A}\left(\mathbf{x}\right)}{\underbrace{\left(\begin{array}{ccc}
\cos x_{3} & 0 & \sin x_{3}\\
\sin x_{3} & 0 & -\cos x_{3}\\
0 & 1 & 0
\end{array}\right)}}\cdot\mathbf{a}
\end{array}
\end{equation}
We take $\mathbf{a}=\mathbf{A}^{-1}(\mathbf{x})\cdot\dot{\mathbf{x}}_{d}$
to get $\dot{\mathbf{x}}=\dot{\mathbf{x}}_{d}$, where $\dot{\mathbf{x}}_{d}=\left(\dot{x}_{1},\dot{x}_{2},\dot{x}_{3}\right)$.

For take sequentially the desired directions: $\dot{\mathbf{x}}_{d}=\left(1,0,0\right)$,
$\dot{\mathbf{x}}_{d}=\left(-1,0,0\right)$, $\dot{\mathbf{x}}_{d}=\left(0,-1,0\right)$,
$\dot{\mathbf{x}}_{d}=\left(0,1,0\right).$ We get the results shown
on Figure \ref{fig:lie_plus.pdf}.

\textcolor{magenta}{}
\begin{figure}[h]
\begin{centering}
\centering\includegraphics[width=12cm]{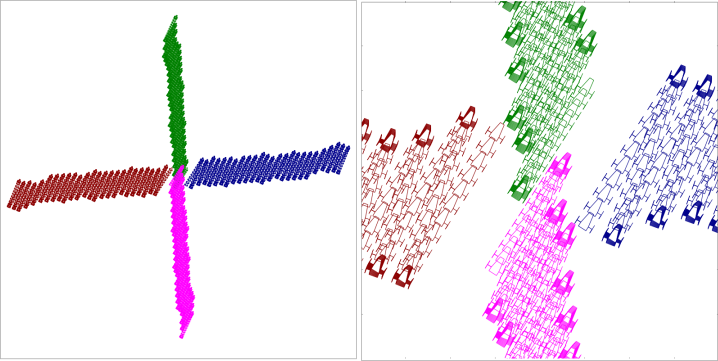}
\par\end{centering}
\caption{Left: The car goes from 0 toward all cardinal directions. The frame
box is $[-1,1]\times[-1,1]$. Right: the same picture, but with a
frame box equal to $[-0.2,0.2]\times[-0.2,0.2]$.}
\label{fig:lie_plus.pdf}
\end{figure}

Figure \ref{fig:lie_dubins1_inverse} shows the complete controller
for our Dubins car to follow a desired pose trajectory $\mathbf{p}(t)$.
For this, a high gain proportional controller was added. As we will
see in the following section, we have a right inverse of the first
order Dubins car.

\textcolor{magenta}{}
\begin{figure}[H]
\begin{centering}
\centering\includegraphics[width=10cm]{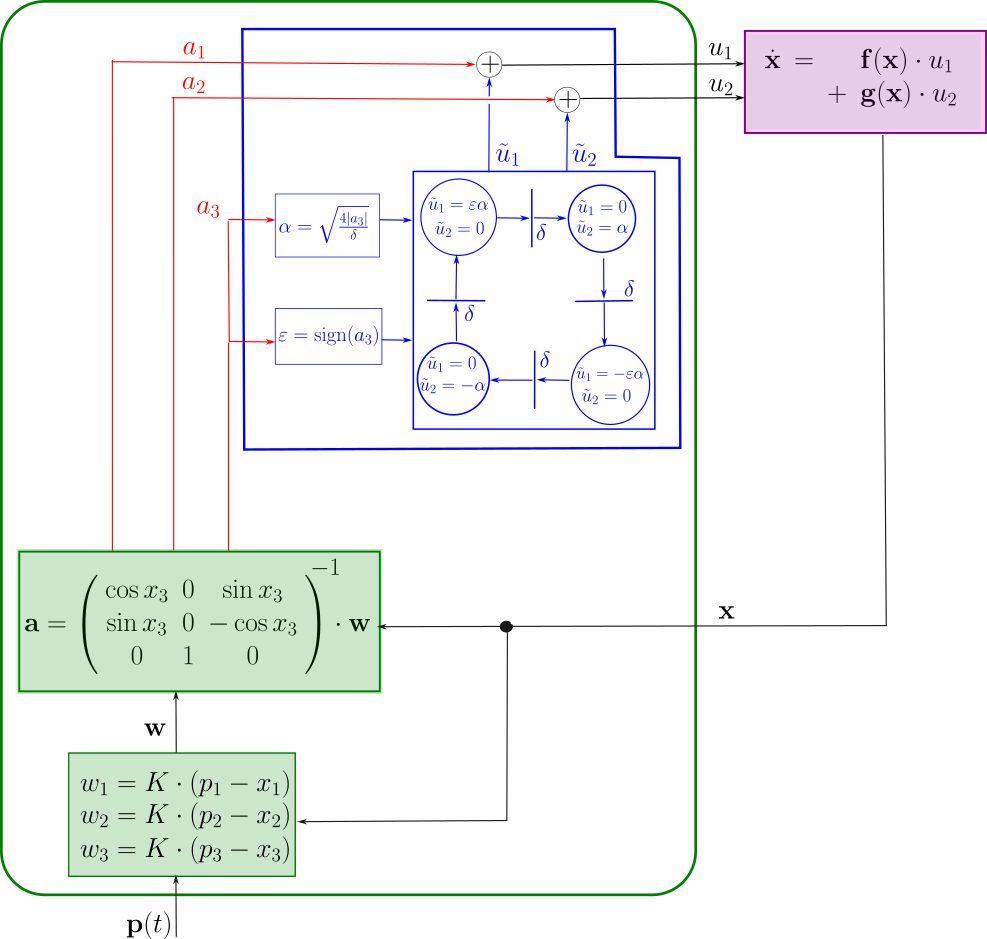}
\par\end{centering}
\caption{The complete controller for the Dubins car of order 1 }
\label{fig:lie_dubins1_inverse}
\end{figure}

\section{Control of the second order Dubins car \label{sec:second:order:dubins}}

\subsection{Model}

We consider the second order Dubins car given by

\begin{equation}
\left\{ \begin{array}{lll}
\dot{x}_{1} & = & x_{4}\cos x_{3}\\
\dot{x}_{2} & = & x_{4}\sin x_{3}\\
\dot{x}_{3} & = & x_{5}\\
\dot{x}_{4} & = & u_{1}\\
\dot{x}_{5} & = & u_{2}
\end{array}\right.
\end{equation}

or equivalently

\begin{equation}
\dot{\mathbf{x}}=\underset{\mathbf{f}(\mathbf{x})}{\underbrace{\left(\begin{array}{c}
x_{4}\cos x_{3}\\
x_{4}\sin x_{3}\\
x_{5}\\
0\\
0
\end{array}\right)}}+\underset{\mathbf{g}_{1}(\mathbf{x})}{\underbrace{\left(\begin{array}{c}
0\\
0\\
0\\
1\\
0
\end{array}\right)}}\cdot u_{1}+\underset{\mathbf{g}_{2}(\mathbf{x})}{\underbrace{\left(\begin{array}{c}
0\\
0\\
0\\
0\\
1
\end{array}\right)}}\cdot u_{2}
\end{equation}

The system has a drift $\mathbf{f}(\mathbf{x})$ which cannot be directly
controlled. To control such a system, we can use a backstepping technique
by decomposing the system as a sequence of right invertible systems. 

\subsection{Right inverse}

The notion of right invertibility comes from \citep{Hirschorn79}
and \citep{Doyle97}, Section 3.3.3. The notion right invertibility
is illustrated by Figure \ref{fig:rightinverse} and says that if
we put the controller $\mathcal{S}_{R}^{-1}$ in front of $\mathcal{S}$,
the effect of $\mathcal{S}$ will be canceled. Equivalently, we have
$\mathbf{y}=\mathcal{S}(\mathbf{u})=\mathcal{S}\circ\mathcal{S}_{R}^{-1}(\mathbf{v})\simeq\mathbf{v}$. 

\textcolor{magenta}{}
\begin{figure}[H]
\begin{centering}
\centering\includegraphics[width=12cm]{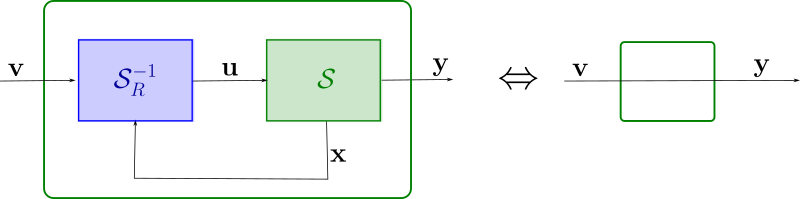}
\par\end{centering}
\caption{The system $\mathcal{S}_{R}^{-1}$ is the right inverse of $\mathcal{S}$}
\label{fig:rightinverse}
\end{figure}

Similarly, the notion left invertibility is illustrated by Figure
\ref{fig:leftinverse} and says that if we put the system $\mathcal{S}_{L}^{-1}$
after $\mathcal{S}$, the effect of $\mathcal{S}$ will be canceled,
\emph{i.e}, $\mathbf{v}=\mathcal{S}_{L}^{-1}\circ\mathcal{S}(\mathbf{u})\simeq\mathbf{u}$.
This left invertibility corresponds more or less to an observer and
will not be used later in this paper.

\textcolor{magenta}{}
\begin{figure}[H]
\begin{centering}
\centering\includegraphics[width=12cm]{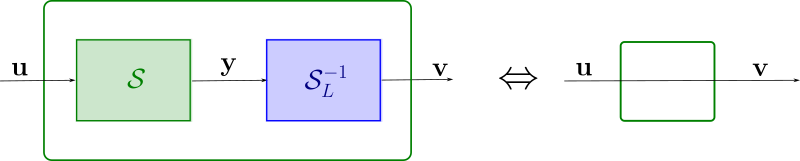}
\par\end{centering}
\caption{The system $\mathcal{S}_{L}^{-1}$ is the left inverse of $\mathcal{S}$}
\label{fig:leftinverse}
\end{figure}

\subsection{Backstepping control of the car}

As illustrated by Figure \ref{fig:lie_backstepping}, our Dubins car
can be decomposed into two systems. Both are right invertible. 
\begin{center}
\textcolor{magenta}{}
\begin{figure}[H]
\begin{centering}
\centering\includegraphics[width=10cm]{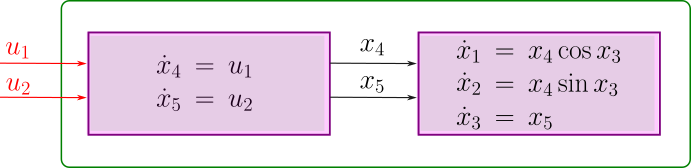}
\par\end{centering}
\caption{The second order Dubins car can be decomposed into a right invertible
system followed by a first order Dubins car}
\label{fig:lie_backstepping}
\end{figure}
\par\end{center}

Indeed, the first block of Figure \ref{fig:lie_backstepping_reg1},
corresponding to two integrators in parallel, can be eliminated by
adding a proportional controller with a gain $K$. If $K$ is large
enough, we can consider that $x_{4}=v_{1}$ and $x_{5}=v_{2}$. We
thus get a first order Dubins car. 
\begin{center}
\textcolor{magenta}{}
\begin{figure}[H]
\begin{centering}
\centering\includegraphics[width=12cm]{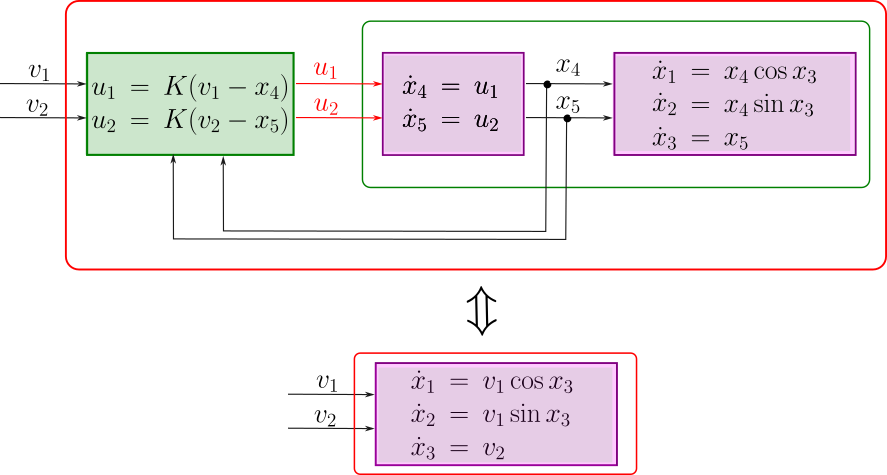}
\par\end{centering}
\caption{The first block of our Dubins car can be eliminated by high gain proportional
controller (green)}
\label{fig:lie_backstepping_reg1}
\end{figure}
\par\end{center}

As seen previously (see Figure \ref{fig:lie_multiplex}), the first
order Dubins, which is a driftless system with two inputs and three
states can be controlled using Lie brackets. As illustrated by Figure
\ref{fig:lie_backstepping_reg2}, the resulting vehicle can be controlled
tangentially (\emph{i.e.}, forward and backward), laterally (\emph{i.e.},
left and right), and in orientation, all independently.
\begin{center}
\textcolor{magenta}{}
\begin{figure}[H]
\begin{centering}
\centering\includegraphics[width=12cm]{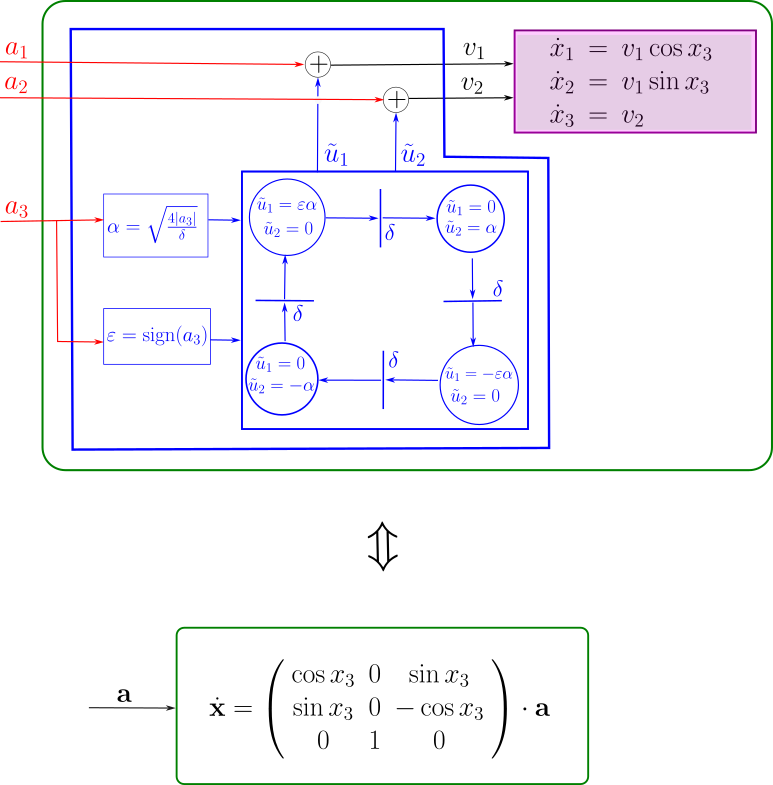}
\par\end{centering}
\caption{Adding a feed forward controller (blue) allows us to control all velocities,
in the car frame}
\label{fig:lie_backstepping_reg2}
\end{figure}
\par\end{center}

As illustrated by Figure \ref{fig:lie_backstepping_reg2}, using a
feedback linearization \citep{slotine91}, we can control the car
in all directions in the robot frame. 
\begin{center}
\textcolor{magenta}{}
\begin{figure}[H]
\begin{centering}
\centering\includegraphics[width=12cm]{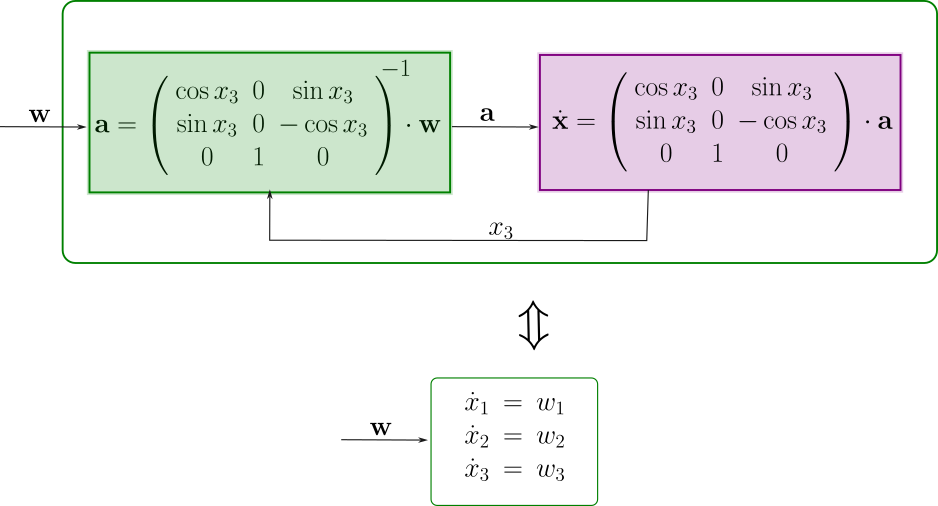}
\par\end{centering}
\caption{The green controller allows us to control the vehicle in all directions
(North, South, East, West) in the world frame and in orientation}
\label{fig:lie_backstepping_reg3}
\end{figure}
\par\end{center}

Finally, with a simple proportional control, we can follow any pose
trajectory $\mathbf{p}(t)$, (see Figure \ref{fig:lie_backstepping_reg4}),
\emph{i.e.}, a trajectory in the world frame, with a heading which
can be chosen independently. 
\begin{center}
\textcolor{magenta}{}
\begin{figure}[H]
\begin{centering}
\centering\includegraphics[width=7cm]{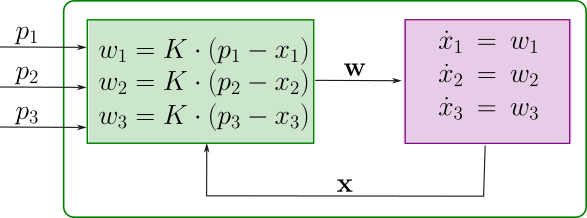}
\par\end{centering}
\caption{The last proportional controller (green) makes our second order Dubins
car follow a pose trajectory (position + heading) }
\label{fig:lie_backstepping_reg4}
\end{figure}
\par\end{center}

Figure \ref{fig:lie_backstepping_simu} shows the trajectory of the
car for the wanted Lissajou trajectory : 
\begin{equation}
\mathbf{p}(t)=\left(\begin{array}{c}
5\sin\frac{t}{100}\\
5\sin\frac{2t}{100}\\
0
\end{array}\right)
\end{equation}
and for different periods of time. Subfigure (a) shows the beginning
of the mission. We observe that the car maneuvers to have both a heading
to East and reach the target point (red). Subfigure (b) shows the
mission a few seconds later. The heading is now almost to East, as
expected. Subfigures (c) and (d) demonstrate the ability for the Dubins
to follows the Lissajou curve, always pointing to East. 

\textcolor{magenta}{}
\begin{figure}[h]
\begin{centering}
\centering\includegraphics[width=12cm]{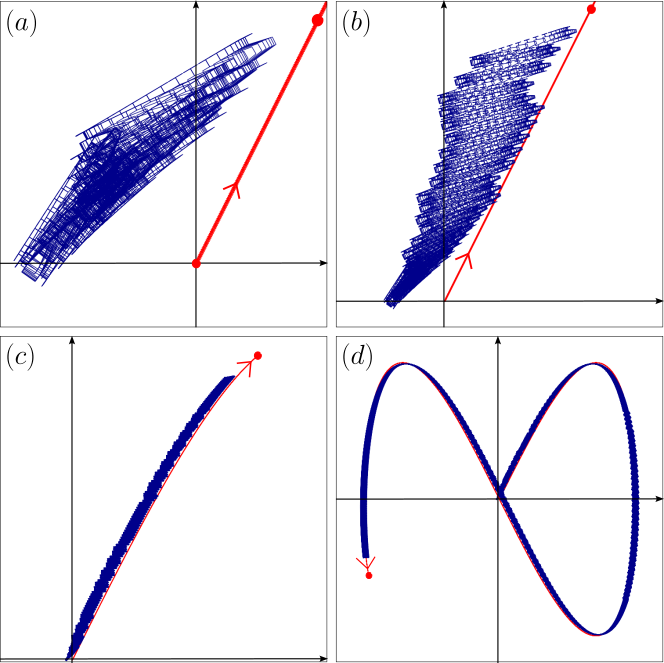}
\par\end{centering}
\caption{The simulation shows the Dubins car following a trajectory (here a
Lissajou curve in red), looking to the East }
\label{fig:lie_backstepping_simu}
\end{figure}

\section{Conclusion\label{sec:Conclusion}}

In control theory \citep{fantoni:02}\citep{Isidori95}\citep{KhalilHassa2002}\citep{LaValle:06},
when dealing with nonlinear systems, we generally ask to control $m$
states variables if we have $m$ inputs. Using Lie brackets, we can
control more than $m$ variables, but the new directions of control
only allow infinitesimal displacements in the state space. These displacements
can be obtained using infinitesimal cycles created by the Lie brackets.
These cycles may be interpreted as an infinitesimal time-division
multiplexing (TDM) method, used for transmitting and receiving independent
signals over a common signal path by means of synchronized switches. 

In this paper, we have illustrated the principle of the approach on
a first order and a second order Dubins cars. Using Lie brackets,
we have shown that we were able to move a car sideway to make a parking
slot, even if sideway motion are very slow.

The source codes of the simulations and a video associated to the
experiment are available at
\begin{center}
\href{https://www.ensta-bretagne.fr/jaulin/dubinsbrackets.html}{https://www.ensta-bretagne.fr/jaulin/dubinsbrackets.html}
\par\end{center}

\medskip{}

\bibliographystyle{plain}

\end{document}